\newcommand{\newsection}[1]{\setcounter{equation}{0} \section{#1}}
\newcommand{\bea}{\begin{eqnarray}}
\newcommand{\eea}{\end{eqnarray}}
\newcommand{\clh}{\mathcal{H}}
\newcommand{\raro}{\rightarrow}
\def \qed {\hfill \vrule height6pt width 6pt depth 0pt}
\def\textmatrix#1&#2\\#3&#4\\{\bigl({#1 \atop #3}\ {#2 \atop #4}\bigr)}
\def\dispmatrix#1&#2\\#3&#4\\{\left({#1 \atop #3}\ {#2 \atop #4}\right)}
\newcommand{\be}{\begin{equation}}
\newcommand{\ee}{\end{equation}}
\newcommand{\ben}{\begin{eqnarray*}}
\newcommand{\een}{\end{eqnarray*}}
\newcommand{\NI}{\noindent}
\newcommand{\bi}{\begin{itemize}}
\newcommand{\ei}{\end{itemize}}
\newcommand{\dsc}{\mathbb{D}}
\newcommand{\cplx}{\mathbb{C}}
\newcommand\ba[1]{\overline{#1}}
\newtheorem{Theorem}{\sc Theorem}[section]
\newtheorem{Lemma}[Theorem]{\sc Lemma}
\newtheorem{Proposition}[Theorem]{\sc Proposition}
\newtheorem{Corollary}[Theorem]{\sc Corollary}
\newtheorem{Definition}[Theorem]{\sc Definition}
\newtheorem{Example}[Theorem]{\sc Example}
\newtheorem{Remark}[Theorem]{\sc Remark}
\newtheorem{Note}[Theorem]{\sc Note}
\newtheorem{Question}{\sc Question}
\newtheorem{ass}[Theorem]{\sc Assumption}
\newcommand{\bt}{\begin{Theorem}}
\def\beginlem{\begin{Lemma}}
\def\beginprop{\begin{Proposition}}
\def\begincor{\begin{Corollary}}
\def\begindef{\begin{Definition}}
\def\beginexamp{\begin{Example}}
\def\beginrem{\begin{Remark}}
\def\beginq{\begin{Question}}
\def\beginass{\begin{ass}}
\def\beginnote{\begin{Note}}
\newcommand{\et}{\end{Theorem}}
\def\endlem{\end{Lemma}}
\def\endprop{\end{Proposition}}
\def\endcor{\end{Corollary}}
\def\enddef{\end{Definition}}
\def\endexamp{\end{Example}}
\def\endrem{\end{Remark}}
\def\endq{\end{Question}}
\def\endass{\end{ass}}
\def\endnote{\end{Note}}
\begin{document}

\title[Characterizations of Symmetrized Polydisc]{Characterizations of Symmetrized Polydisc}

\author[Gorai]{Sushil Gorai}
\address{Indian Statistical Institute, Statistics and Mathematics Unit, 8th Mile, Mysore Road, Bangalore, 560059, India}
\email{sushil.gorai@gmail.com}

\author[Sarkar]{Jaydeb Sarkar}
\address{Indian Statistical Institute, Statistics and Mathematics Unit, 8th Mile, Mysore Road, Bangalore, 560059, India}
\email{jay@isibang.ac.in, jaydeb@gmail.com}
%\today

\keywords{Symmetrized polydisc, Schur theorem, positive definite
matrix}

\subjclass[2000]{32A10, 32A60, 32A70, 47A13, 47A25, 46E22}

\begin{abstract}
Let $\Gamma_n$, $n \geq 2$, denote the symmetrized polydisc in
$\mathbb{C}^n$, and $\Gamma_1$ be the closed unit disc in
$\mathbb{C}$. We provide some characterizations of elements in
$\Gamma_n$. In particular, an element $(s_1, \ldots, s_{n-1}, p) \in
\mathbb{C}^n$ is in $\Gamma_n$ if and only if $s_j = \beta_j +
\overline{\beta_{n-j}} p$, $j = 1, \ldots, n-1$, for some $(\beta_1,
\ldots, \beta_{n-1}) \in \Gamma_{n-1}$, and $|p| \leq 1$.
\end{abstract}

\maketitle

\newsection{Introduction}

In this paper we study and characterize the symmetrized polydisc in
the $n$-complex plane $\mathbb{C}^n$, $n \geq 2$. We denote by
$\Gamma_n := \overline{\mathbb{G}_n}$ the symmetrized polydisc in
$\mathbb{C}^n$, where \[\mathbb{G}_n = \{\pi_n (\bm{z}) : \bm{z} \in
{\mathbb{D}}^n\},\]$\mathbb{D} = \{z \in \mathbb{C} : |z| < 1\}$,
and  $\pi_n : \mathbb{C}^n \raro \mathbb{C}^n$ is the symmetrization
map
\[\pi_n(\bm{z}) = (\sum_{1 \leq i \leq n} z_i, \sum_{1 \leq i_1 <
i_2 \leq n} z_{i_1} z_{i_2}, \ldots, \prod_{i=1}^n z_i),\]for all
$\bm{z} = (z_1, \ldots, z_n) \in \mathbb{C}^n$. It follows easily
from the fundamental theorem of algebra that $\pi_n$ is an onto map,
that is, $\pi_n(\mathbb{C}^n) = \mathbb{C}^n$.

We first turn to the case of symmetrized bidisc $\Gamma_2$. The
early study of symmetrized bidisc was motivated to a large extent by
the theory spectral Nevanlinna-Pick problem, $\mu$-synthesis problem
in control engineering and complex geometry of domains in
$\mathbb{C}^n$ (cf. \cite{AY4}, \cite{AY6}, \cite{AY7}). One of the
most important tools in the investigation of function theory and
operator theory on $\Gamma_2$ is the following classification result
due to Agler and Young: Let $(s, p) \in \mathbb{C}^2$ and $|p| \leq
1$. Then
\[(s, p) \in \Gamma_2 \Leftrightarrow  \exists \beta \in
\overline{\mathbb{D}} \mbox{~such that~} s = \beta + p
\overline{\beta}.\]Moreover, for a pair of commuting operators $(S,
P)$ acting on some separable Hilbert space $\clh$, $\Gamma_2$ is a
spectral set of $(S, P)$ (see \cite{AY5}) if and only if $S = X + P
X^*$ for some operator $X$ with some natural contractivity property
in an appropriate sense (see \cite{S}).

In this paper we establish some characterization results concerning
the elements of $\Gamma_n$, $n \geq 2$. One of our main results is the
following theorem.

\bt\label{T:Gamman} Let $(s_1,\dots,s_{n-1},p)\in\cplx^n$, $n \geq
2$, and $\Gamma_1 = \overline{\mathbb{D}}$. Then the following are
equivalent: \bi
\item[(i)] $(s_1,\dots,s_{n-1},p)\in \Gamma_n$.
\smallskip

\item[(ii)]$|p| \leq 1$ and $s_j=\beta_j+\ba{\beta_{n-j}}p$, $j=1,\dots, n-1$, for some $(\beta_1,\dots,\beta_{n-1})\in\Gamma_{n-1}$.
\ei \et

\NI In particular, for the case of $\Gamma_2$ we recover the
characterization result of Agler and Young.

For the rest of the paper, we let $n \geq 2$ and denote the unit
circle $\{z \in \mathbb{C}: |z| = 1\}$ by $\mathbb{T}$. Given a
holomorphic function $f$ on a domain $\Omega \subseteq
\mathbb{C}^m$, we denote by $Z_f$ the set of all zeroes of $f$ in
$\Omega$.

The reminder of this paper is organized as follows. Section 2 below
collects some general results and facts concerning zero sets of
polynomials. The proofs of the main theorems are given in Section 3.

\vspace{0.2in}

\noindent \textsf{Added in proof:} After completing our work, we became aware of the work of Constantin Costara (see \cite{CC} and \cite{CC-th}) which overlaps considerably with ours. In particular, one of our main results, Theorem \ref{T:Gamman}, have already appeared in Costara's work (see \cite{CC-th} and Theorem 3.7  in \cite{CC}). However, our proof of Theorem \ref{T:Gamman} is simple and different from that of Constantin Costara. Moreover, Theorem \ref{T:chargnkernel} and the equivalence of (ii) and (iii) of Theorem \ref{T:chargn} are new. We are indebted to Nicholas Young and Constantin Costara for pointing out these references to us.

\newsection{Preparatory Results}\label{sec-technical}

In this section we will prove a couple of auxiliary results that
will be used in the proofs of the theorems. We will also list some
results from the literature about the location of zeros of
polynomials.

\begin{Lemma}\label{L:gn}
Let $(s_1,\dots, s_{n-1},p)\in\cplx^n$ with $|p|<1$. Then there is
an element $(\beta_1,\dots, \beta_{n-1})\in \cplx^{n-1}$ such that $
s_j=\beta_j+\ba{\beta_{n-j}}p$, $j=1,\dots, n-1$. Moreover,
$(\beta_1,\dots, \beta_{n-1})$ is given by the following identity\[
 \beta_j=\frac{s_{n-j}-\ba{s_j}p}{1-|p|^2},\qquad j=1,\dots, {n-1}.
\]
\end{Lemma}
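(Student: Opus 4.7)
The plan is to view the desired relations, for each index $j$, as a small linear system in $\beta_j$ and $\overline{\beta_{n-j}}$, and then solve it using that $|p|<1$ guarantees invertibility. Concretely, for each $j\in\{1,\dots,n-1\}$ I pair up the $j$-th and $(n-j)$-th equations,
\[
s_j=\beta_j+\ba{\beta_{n-j}}\,p,\qquad s_{n-j}=\beta_{n-j}+\ba{\beta_j}\,p,
\]
and take the complex conjugate of the second one to obtain
\[
\beta_j+p\,\ba{\beta_{n-j}}=s_j,\qquad \bar p\,\beta_j+\ba{\beta_{n-j}}=\ba{s_{n-j}}.
\]
This is a $2\times 2$ $\cplx$-linear system in the unknowns $\beta_j$ and $\ba{\beta_{n-j}}$, whose coefficient matrix
$\bigl(\begin{smallmatrix} 1 & p \\ \bar p & 1\end{smallmatrix}\bigr)$
has determinant $1-|p|^2$, nonzero by hypothesis.

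Applying Cramer's rule (equivalently, eliminating $\ba{\beta_{n-j}}$ by substitution) gives an explicit expression for $\beta_j$ in terms of $s_j,s_{n-j}$ and $p$. Plugging this candidate back into the original pair then shows, after a one-line cancellation using $p\bar p=|p|^2$, that both equations are satisfied; this provides both the existence of the $\beta_j$'s and their explicit form stated in the lemma.

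The only bit of bookkeeping is the fixed-point case, which occurs when $n$ is even and $j=n/2$. There the pairing collapses to a single equation $s_{n/2}=\beta_{n/2}+\ba{\beta_{n/2}}\,p$; conjugating it produces the \emph{same} $2\times 2$ system, now in the unknowns $\beta_{n/2}$ and $\ba{\beta_{n/2}}$, and the solution again has determinant $1-|p|^2$, yielding the same formula with $j=n-j=n/2$. So no separate argument is really needed.

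I do not anticipate any serious obstacle: the content is entirely linear-algebraic, and the condition $|p|<1$ is exactly what is needed to invert the $2\times 2$ matrix. The only care required is to match indices correctly when transcribing the solution (so that $\beta_j$ is indexed consistently on the two sides of the pair $\{j,n-j\}$) and to check that the resulting formula does return the correct $s_j$ upon substitution.
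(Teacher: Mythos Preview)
Your proposal is correct and follows essentially the same approach as the paper: pair the $j$-th and $(n-j)$-th equations and solve the resulting $2\times 2$ linear system, using $|p|<1$ to ensure the determinant $1-|p|^2$ is nonzero. Your write-up is in fact slightly more detailed than the paper's (you spell out the coefficient matrix, invoke Cramer's rule, and explicitly address the self-paired case $j=n/2$ for even $n$), but the underlying argument is identical.
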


\begin{proof} The conclusion amounts to solve the following set of
equations:
 \[
 s_j=\beta_j+\ba{\beta_{n-j}}p,\qquad j=1, \ldots, {n-1},
 \]
for $(\beta_1, \dots,
 \beta_{n-1})\in \mathbb{C}^{n-1}$.
We now consider the above $n-1$ equations in pairs, that is:
\begin{equation}\label{eq-sk}
 s_k = \beta_k + \ba{\beta_{n-k}}p \quad \quad \mbox{and} \quad
 \quad s_{n-k} = \beta_{n-k}+\ba{\beta_k} p,
\end{equation}
where $k=1, \dots,[n/2]$, if $n$ is odd, and $k=1, \dots, n/2$, if
$n$ is even. Since $|p|<1$, by solving each pair of equations in
\eqref{eq-sk}, we get that
\[
 \beta_j=\frac{s_{n-j}-\ba{s_j}p}{1-|p|^2},\qquad j=1,\dots, {n-1}.
\]
This completes the proof.
\end{proof}

The following lemma is an application of the Rouche's theorem. This
will play an important role in our approach to the symmetrized
polydisc.

\begin{Lemma}\label{L:polyzero}
Let $n \geq 2$, $(a_1, \ldots, a_n) \in \mathbb{C}^n$, $|a_n| <1$,
and $(b_1, \ldots, b_{n-1}) \in \mathbb{C}^{n-1}$. Furthermore, let
$a_j=b_j+\overline{b_{n-j}}a_n$, $j=1,\dots, n-1$, and
$f(z)=z^n-a_1z^{n-1}+\dots + (-1)^{n-1}a_{n-1}z+(-1)^na_n$ and
 $g(z)=z^{n-1}-b_1z^{n-2}+\dots +
 (-1)^{n-2}b_{n-2}z+(-1)^{n-1}b_{n-1}$. Then

\NI (i) The number of zeros of $f$ in $\dsc$ is the same as the
number of zeros of $zg$ in $\dsc$.

\NI(ii) $Z_f \cap \mathbb{T} = Z_g \cap \mathbb{T}$.
\end{Lemma}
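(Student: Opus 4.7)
The approach is to reduce both claims to a single identity. Define the reciprocal polynomial of $g$ by
\[
g^{*}(z) \;:=\; z^{n-1}\,\ba{g(1/\ba{z})} \;=\; \sum_{k=0}^{n-1}(-1)^k \ba{b_k}\, z^k,
\]
where $b_0 := 1$. A direct expansion of the hypothesis $a_j - b_j = \ba{b_{n-j}}\, a_n$ across the coefficients of $f$ and $zg$, followed by the substitution $k = n-j$, should yield the key identity
\[
f(z) \;=\; z\,g(z) + (-1)^n a_n\, g^{*}(z).
\]
I then plan to use two standard features of $g^{*}$: (a) on $\mathbb{T}$ one has $g^{*}(z) = z^{n-1}\,\ba{g(z)}$, so $|g^{*}(z)| = |g(z)|$ and $Z_{g^{*}}\cap\mathbb{T} = Z_g\cap\mathbb{T}$; and (b) for each $z_0\in\mathbb{T}$, the linear factor $(z-z_0)$ is self-reciprocal up to a unimodular scalar, $(z-z_0)^{*} = -\ba{z_0}(z-z_0)$.

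Part (ii) drops out of the identity combined with (a). If $z_0\in Z_g\cap\mathbb{T}$, then $g^{*}(z_0)=0$, and so $f(z_0)=0$. Conversely, if $z_0\in Z_f\cap\mathbb{T}$, the identity and (a) give $|g(z_0)| = |z_0\,g(z_0)| = |a_n|\,|g^{*}(z_0)| = |a_n|\,|g(z_0)|$, so $|a_n|<1$ forces $g(z_0)=0$.

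For part (i), the naive plan would be to apply Rouch\'e's theorem on $|z|=1$ directly to $zg$ and $(-1)^n a_n g^{*}$; but since $|g^{*}| = |g|$ on $\mathbb{T}$, strict inequality fails at every zero of $g$ lying on $\mathbb{T}$, and handling those common zeros is the main obstacle. I will bypass it by factoring $g = G\,h$, where $G$ is the product of those linear factors of $g$ whose roots lie on $\mathbb{T}$ (counted with multiplicity) and $h$ has no zeros on $\mathbb{T}$. Property (b) then yields $G^{*} = \gamma\, G$ for some $\gamma\in\mathbb{T}$, so $g^{*} = \gamma\, G\, h^{*}$ and the identity factors as
\[
f(z) \;=\; G(z)\bigl[\,z\,h(z) + (-1)^n \gamma\, a_n\, h^{*}(z)\,\bigr].
\]
On $|z|=1$, one now has $|z\,h(z)| = |h(z)| > 0$ while $|(-1)^n \gamma\, a_n\, h^{*}(z)| = |a_n|\,|h(z)| < |h(z)|$, uniformly; Rouch\'e's theorem applies cleanly and gives that the bracket has the same number of zeros in $\dsc$ as $z\,h$. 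Since $G$ contributes no zeros to $\dsc$, a final count gives
\[
\#\bigl(Z_f\cap\dsc\bigr) \;=\; \#\bigl(Z_{zh}\cap\dsc\bigr) \;=\; 1 + \#\bigl(Z_h\cap\dsc\bigr) \;=\; \#\bigl(Z_{zg}\cap\dsc\bigr),
\]
which is (i).
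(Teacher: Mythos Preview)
Your proof is correct and follows the same core strategy as the paper: derive the identity $f(z)=zg(z)+(-1)^n a_n\,g^{*}(z)$ (the paper writes this as a direct computation of $|f(z)-zg(z)|$), note that $|g^{*}|=|g|$ on $\mathbb{T}$, and then invoke Rouch\'e's theorem on the unit circle. Part (ii) is handled identically.

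Where you go beyond the paper is in part (i). The paper applies Rouch\'e directly to $zg$ and $f-zg$ on $|z|=1$, asserting $|(f-zg)(w)|<|(zg)(w)|$ for all $w\in\mathbb{T}$; but this strict inequality fails at any $w\in Z_g\cap\mathbb{T}$, where both sides vanish, so the classical Rouch\'e hypothesis is not met. Your extra step of factoring $g=G\,h$ with $Z_G\subset\mathbb{T}$ and $Z_h\cap\mathbb{T}=\varnothing$, together with the observation that $G^{*}=\gamma G$ for a unimodular $\gamma$, is precisely what is needed to make the Rouch\'e argument go through in that borderline case. So your argument is not merely a rephrasing: it closes a gap that the paper's proof leaves open.
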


\begin{proof} For $a_j= b_j+\ba{b_{n-j}}a_n$, $j=1,\dots, {n-1}$, as above we have
\[\begin{split}
 f(z)& = z^n-a_1z^{n-1}+\dots + (-1)^{n-1}a_{n-1}z+(-1)^na_n \\
 & = z^n- (b_1+\ba{b_{n-1}}a_n)z^{n-1}+\dots +(-1)^{n-1}(b_{n-1}+\ba{b_1}a_n)z+ (-1)^na_n.
\end{split}\]
It follows that
\begin{equation}\label{E:zerosfg}
 |f(z)-zg(z)|=|a_n||b_{n-1}\ba{z}^{n-1}-\dots +(-1)^{n-2}b_1\ba{z}+(-1)^{n-1}|.
\end{equation}
If we restrict the above equation on $\mathbb{T}$, we get
\[
|f(z)-zg(z)|=|a_n||z^{n-1}-b_1z^{n-2}+\dots+(-1)^{n-2}b_{n-2}z+(-1)^{n-1}b_{n-1}|.
\]
By virtue of $|a_n| <1$ this yields
\[|(f-zg)(w)| =|a_n||g(w)| = |a_n| |(zg)(w)| <|(zg)(w)|,
\]for all $|w| = 1$.  Then Rouche's theorem shows that $f$ and $zg$ have the same number of zeroes inside $\dsc$.
This completes the proof of $(i)$

\NI We now turn to (ii). Let $f(\lambda)=0$ for some $\lambda \in
\mathbb{T}$. Hence, by \eqref{E:zerosfg}
\[
|g(\lambda)|=|a_n||g(\lambda)|.
\]
If $g(\lambda)\neq 0$ then we have $|g(\lambda)|<|g(\lambda)|$,
which is a contradiction. This implies that $g(\lambda)=0$.

\NI Conversely, suppose $g(\lambda)=0$ for some $\lambda \in
\mathbb{T}$. Using \eqref{E:zerosfg} again it follows that
\[
|f(\lambda)|=0.
\]
This completes the proof of (ii).
\end{proof}

Now, as an easy consequence we obtain:

\begin{Proposition}\label{Prop:zeros}
Let $n \geq 2$, $(a_1, \ldots, a_n) \in \mathbb{C}^n$, $|a_n| <1$,
and $(b_1, \ldots, b_{n-1}) \in \mathbb{C}^{n-1}$. Furthermore, let
$a_j=b_j+\overline{b_{n-j}}a_n$, $j=1,\dots, n-1$, and
$f(z)=z^n-a_1z^{n-1}+\dots + (-1)^{n-1}a_{n-1}z+(-1)^na_n$ and
 $g(z)=z^{n-1}-b_1z^{n-2}+\dots +
 (-1)^{n-2}b_{n-2}z+(-1)^{n-1}b_{n-1}$.  Then all the zeros of $f$ lies
 in $\ba{\dsc}$ if and only if all the zeros of $g$ lies in $\ba{\dsc}$.
 \end{Proposition}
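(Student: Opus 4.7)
The plan is to combine both parts of Lemma~\ref{L:polyzero} through a zero-count argument. For a polynomial $h$, let $N_h(\dsc)$ and $M_h(\mathbb{T})$ denote the number of zeros of $h$ in $\dsc$ and on $\mathbb{T}$ respectively, counted with multiplicity, so that $h$ has all its zeros in $\ba{\dsc}$ precisely when $N_h(\dsc) + M_h(\mathbb{T}) = \deg h$. Part~(i) of Lemma~\ref{L:polyzero} yields $N_f(\dsc) = N_{zg}(\dsc) = N_g(\dsc) + 1$ at once, since $zg$ carries one extra zero at the origin compared to $g$.

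The more delicate step is to upgrade part~(ii) of Lemma~\ref{L:polyzero} from the set equality $Z_f \cap \mathbb{T} = Z_g \cap \mathbb{T}$ to the equality of multiplicities $M_f(\mathbb{T}) = M_g(\mathbb{T})$. The proof of Lemma~\ref{L:polyzero} supplies the identity $f(z) = zg(z) + a_n q(z)$, where $q(z) = -\ba{b_{n-1}}z^{n-1} + \ba{b_{n-2}}z^{n-2} - \dots + (-1)^n$, together with $|q(z)| = |g(z)|$ on $\mathbb{T}$. A short computation identifies $q(z) = (-1)^n z^{n-1}\ba{g(1/\ba{z})}$, i.e.\ $q$ is the reciprocal polynomial of $g$ up to sign, so $q$ and $g$ have identical zero sets on $\mathbb{T}$ with matching multiplicities. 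At a common boundary zero $\lambda \in Z_g \cap \mathbb{T}$ of order $m$, write $g(z) = c_g(z-\lambda)^m + O((z-\lambda)^{m+1})$ and $q(z) = c_q(z-\lambda)^m + O((z-\lambda)^{m+1})$; comparing leading moduli along $\mathbb{T}$ near $\lambda$ forces $|c_g| = |c_q|$. The leading term of $f$ at $\lambda$ is then $(\lambda c_g + a_n c_q)(z-\lambda)^m$, and this coefficient cannot vanish since $|a_n c_q| = |a_n||c_g| < |c_g| = |\lambda c_g|$ by the hypothesis $|a_n| < 1$. Hence $f$ vanishes at $\lambda$ to exactly order $m$, and summing over boundary zeros gives $M_f(\mathbb{T}) = M_g(\mathbb{T})$.

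Combining both steps, $N_f(\dsc) + M_f(\mathbb{T}) = N_g(\dsc) + M_g(\mathbb{T}) + 1$. Since $\deg f = n$ and $\deg g = n-1$, this translates precisely into the desired equivalence: $f$ has all its zeros in $\ba{\dsc}$ if and only if $g$ does. The main obstacle is the multiplicity upgrade in the second step, since Lemma~\ref{L:polyzero}(ii) by itself records only a set equality on the circle. The key observations are the identification of $q$ with the reciprocal polynomial of $g$ (so that zero orders on $\mathbb{T}$ match) and the use of $|a_n| < 1$ to prevent cancellation in the leading coefficient of $f$ at each boundary zero.
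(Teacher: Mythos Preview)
Your proof is correct and follows the paper's intended route---deducing the proposition from Lemma~\ref{L:polyzero}---but you supply the details that the paper's one-line ``follows directly'' omits. In particular, your identification of $q$ with the reflected polynomial of $g$ and the resulting argument that $f$ and $g$ have the \emph{same multiplicities} at each boundary zero is exactly the step needed to upgrade the set equality in Lemma~\ref{L:polyzero}(ii) to a zero-count identity, and this point is not made explicit in the paper.
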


\begin{proof}
The proof follows directly from Lemma~\ref{L:polyzero}.

\end{proof}

Among the applications to domains in $\mathbb{C}^n$ in which
positive definite forms played an important role, the following one,
which will be useful in the sequel, stand out as particularly
impressive \cite{Schur} (see also \cite{PY}):

\bt\label{T:schur}\textsf{(Schur)} Given a polynomial
$p(z)=a_0z^n+a_1z^{n-1}+\dots +a_n$, $a_0\neq 0$, the zero set $Z_p$
is a subset of $\dsc$ if and only if the Hermitian form
\[ H(x)= \sum_{j=1}^n |\overline{a_0}x_j+\overline{a_1}x_{j+1}+\dots +\overline{a_{n-j}}x_n|^2
-\sum_{j=1}^n|a_nx_j+a_{n-1}x_{j+1}+\dots +a_{n-j}x_n|^2
\]
is positive definite. \et

\qed

\newsection{Main Results}\label{sec-proofs}

With this background in place, we now state and prove the main
results of this paper. Some of our results are new even in the case
of $\Gamma_2$.

We first note that a necessary condition for $\bm{w} \in
\mathbb{C}^n$ to be in $\mathbb{G}_n$ is that $|w_j|<
{{n}\choose{j}}$, $j = 1, \ldots, n$. Let us assume 
$\bm{w}=\pi_n(\bm{z})$ for some $\bm{z}\in\mathbb{C}^n$. It follows from the definition of $\mathbb{G}_n$
that: {\em if $\pi(\bm{z})\in \mathbb{G}_n$, for $\bm{z} =
(z_1,\dots, z_n) \in \mathbb{C}^n$, then $\pi(z_1,\dots, z_j) \in \mathbb{G}_j$, for any $1 < j<n$.}
Hence, it is also necessary that, for each $1< k<n$ and $1\leq m_1<\dots< m_k\leq n$
\begin{equation}\label{E:wmk}
(w^{(k)}_{m_1},\dots,w^{(k)}_{m_k})=\pi_k(z_{m_1},\dots,z_{m_k}),
\end{equation}
 we have
$|w^{(k)}_{m_l}|<{{k}\choose{l}}$ for all $1\leq l\leq k$.
We are now ready for the first
characterization result.

\begin{Theorem}\label{T:chargnkernel}
Let $\bm{z}, \bm{w} \in \mathbb{C}^n$, $\pi_n(\bm{z}) = \bm{w}$ and
$|w_j|< {{n}\choose{j}}$, $j = 1, \ldots, n$.  
Also assume that $|w^{(2)}_{m_1}|<2$ and $|w^{(2)}_{m_2}|<1$ for all $1\leq m_1< m_2\leq n$, 
where $w^{(2)}_{m_1}, w^{(2)}_{m_2}$ are as in the notation  of \eqref{E:wmk}.
Then $\bm{w} \in
\mathbb{G}_n$ if and only if
\[ \mathop{\Pi}_{j=1}^n(1-|z_j|^2)>0.\]
\end{Theorem}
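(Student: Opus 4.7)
The plan is to prove the two directions separately, with the forward direction being essentially automatic and the converse resting on a parity argument.

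For the forward implication ($\Rightarrow$), I would use the symmetric-group invariance of $\pi_n$. Two points $\bm{z},\bm{z}'\in\mathbb{C}^n$ satisfy $\pi_n(\bm{z})=\pi_n(\bm{z}')$ if and only if the monic polynomials $\prod_j(\lambda-z_j)$ and $\prod_j(\lambda-z'_j)$ agree, equivalently $\bm{z}$ is a permutation of $\bm{z}'$. Hence if $\bm{w}=\pi_n(\bm{z})\in\mathbb{G}_n=\pi_n(\mathbb{D}^n)$, then any preimage, and in particular the given $\bm{z}$, already lies in $\mathbb{D}^n$, so $\prod_{j=1}^n(1-|z_j|^2)>0$.

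For the converse ($\Leftarrow$), the key observation is a parity count. Suppose $\prod_{j=1}^n(1-|z_j|^2)>0$. Then no factor vanishes, so $|z_j|\neq 1$ for every $j$; and the cardinality $k:=\#\{j:|z_j|>1\}$ must be even, for otherwise the product would be negative. I claim $k=0$. Suppose instead $k\geq 2$ and pick indices $m_1<m_2$ with $|z_{m_1}|,|z_{m_2}|>1$. Then $|z_{m_1}z_{m_2}|>1$. But under the convention of \eqref{E:wmk} the second coordinate of $\pi_2(z_{m_1},z_{m_2})$ is exactly $w^{(2)}_{m_2}=z_{m_1}z_{m_2}$, which contradicts the hypothesis $|w^{(2)}_{m_2}|<1$. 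Hence $k=0$, every $|z_j|<1$, and $\bm{w}=\pi_n(\bm{z})\in\mathbb{G}_n$.

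The main conceptual point (as opposed to a genuine technical obstacle) is recognizing that the pairwise hypothesis $|w^{(2)}_{m_2}|<1$ across all pairs $m_1<m_2$ is precisely the strengthening needed to upgrade the sign--parity information supplied by $\prod(1-|z_j|^2)>0$ into a uniform bound $|z_j|<1$ for every $j$. Notice that neither the coordinate bounds $|w_j|<\binom{n}{j}$ nor the companion hypothesis $|w^{(2)}_{m_1}|<2$ are actually consumed by the argument; they are listed earlier in the excerpt as necessary conditions for membership in $\mathbb{G}_n$ and therefore may be assumed for free, but they play no active role in this proof.
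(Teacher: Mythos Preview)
Your argument is correct and follows essentially the same approach as the paper's proof: both use the parity of the number of indices with $|z_j|>1$ together with the pairwise hypothesis $|w^{(2)}_{m_2}|=|z_{m_1}z_{m_2}|<1$ to force every $|z_j|<1$. Your version is in fact slightly cleaner, since the paper first invokes $|w_n|<1$ to secure one index with $|z_j|<1$ before running the parity argument, a step your direct even-count formulation renders unnecessary---and your remark that the hypotheses $|w_j|<\binom{n}{j}$ and $|w^{(2)}_{m_1}|<2$ are never consumed is accurate.
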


\begin{proof}
Let $\bm{z}, \bm{w} \in \mathbb{C}^n$, $\pi_n(\bm{z}) = \bm{w}$ and
$|w_j|< {{n}\choose{j}}$, $j = 1, \ldots, n$.

\NI If $\mathop{\Pi}_{j=1}^n(1-|z_j|^2)>0$, then $|z_j|<1$ for some
$1\leq j\leq n$. If not, then $|\Pi_{j=1}^nz_j|>1$, which
contradicts the fact that $|w_n| < {{n}\choose{n}} = 1$. We claim
that $z_ i \in \mathbb{D}$ for all $i = 1, \ldots, n$. If not, then
$|z_l|>1$ for some $l$, $1\leq l\leq n$. From this and the fact that
$\mathop{\Pi}_{j=1}^n(1-|z_j|^2)>0$, it readily follows that $|z_m|
>1$ for some $m = 1, \ldots, n$ and $l \neq m$. We may assume without loss of generality that $l=1$ and $m=2$. That is,
\begin{equation}\label{E:kerdim2}
|z_1|>1 \quad \text{and}\quad |z_2|>1.
\end{equation}
 We now consider $(w^{(2)}_1,w^{(2)}_2)=\pi(z_1,z_2)$. By equation \eqref{E:kerdim2} 
we get that $|w^{(2)}_2|>1$, which contradicts the assumption that $|w^{(2)}_2|<1$.

\NI The necessary part follows immediately from the definition of
$\mathbb{G}_n$.
\end{proof}

The following result (see \cite{AY8} and \cite{AY5}) provides a
useful way to characterize the elements of $\mathbb{G}_2$: Let $(s,
p) \in \mathbb{C}^2$. Then $(s, p) \in \mathbb{G}_2$ if and only if
\[\begin{bmatrix} 1- |p|^2 & - \bar{s} + s \bar{p}\\- s + \bar{s} p
& 1 - |p|^2\end{bmatrix} > 0,\]if and only if \[s = \beta + p
\bar{\beta},\]for some $\beta \in \mathbb{G}_1 := \mathbb{D}$.

We generalize the above fact in the following sense:

\bt \label{T:chargn} Let $(s_1, s_2, \dots, s_{n-1}, p)\in \cplx^n$
with $|p|<1$. Then the following are equivalent.

\NI (i) $(s_1, s_2, \dots, s_{n-1}, p)\in \mathbb{G}_n$.

\NI (ii) There exists $(\beta_1, \dots, \beta_{n-1})\in
\mathbb{G}_{n-1}$ such that $s_j=\beta_j+\ba{\beta_{n-j}}p$, $j=1,
\dots, n-1$.
\smallskip

\NI(iii) The following matrix is positive definite:
\[
\begin{bmatrix}
1-|p|^2 & -s_1+\ba{s}_{n-1}p&  \dots & (-1)^{n-1}s_{n-1}+ (-1)^n \ba{s}_1 p \\
-\ba{s}_1 + s_{n-1}\ba{p} & 1+|s_1|^2-|s_{n-1}|^2-|p|^2 & \dots& (-1)^{n-2}s_{n-2}+(-1)^{n-1}\ba{s}_2p\\
\vdots & \vdots &\dots & \vdots \\
(-1)^{n-1}\ba{s}_{n-1}+(-1)^ns_1\ba{p} &
(-1)^{n-2}\ba{s}_{n-2}+(-1)^{n-1}s_2\ba{p} & \dots & 1-|p|^2
\end{bmatrix}.
\]
\et

\begin{proof}
We first prove $(i)\Longleftrightarrow (ii)$. Since
$(s_1,\dots,s_{n-1},p)\in \cplx^n$ with $|p|<1$, by Lemma~\ref{L:gn}
there exists $(\beta_1,\dots,\beta_{n-1})\in \cplx^{n-1}$ such that
\[
s_j=\beta_j+\overline{\beta_{n-j}}p,\qquad j=1,\dots, {n-1}.
\]
By Proposition~\ref{Prop:zeros}, $(s_1,\dots,s_{n-1}, p)\in
\mathbb{G}_n$ if and only if $(\beta_1,\dots,\beta_{n-1})\in
\mathbb{G}_{n-1}$.

$(i)\Longleftrightarrow (iii)$ follows from Theorem~\ref{T:schur},
by unfolding the positive definiteness of the Hermitian form in term
of the Hermitian matrix that it corresponds to.
\end{proof}

To proceed further, we recall the following results on the
distinguished boundary of $\Gamma_n$ (see Theorem 2.4, \cite{BsR}).
Recall also that the distinguished boundary of $\Gamma_n$ is given
by $\{\pi_n(\bm{z}) : \bm{z} \in \mathbb{T}^n\}$.

\bt \label{T:distinguishedbdy} Let $(s_1,\dots,s_{n-1},p)\in
\cplx^n$. Then the following statements are equivalent:

\NI (i) $(s_1,\dots,s_{n-1},p)$ lies in the distinguished boundary
of $\Gamma_n$.

\NI (ii) $|p|=1$ and $s_j=\beta_j+\ba{\beta_{n-j}}p$, $j=1,\dots,
{n-1}$ for some $(\beta_1, \dots, \beta_{n-1})$ in the distinguished
boundary of $\Gamma_{n-1}$. \et

We now proceed to prove the main theorem.

\begin{proof}[Proof of Theorem~\ref{T:Gamman}]

Note that by virtue of Theorem \ref{T:distinguishedbdy} stated
above, we only have to consider the case $|p| < 1$.

\NI Let $(s_1,\dots,s_{n-1},p)\in \Gamma_n$. First assume that
$|p|<1$. Invoke Lemma~\ref{L:gn} to conclude that
\begin{equation}\label{eq-sj}
s_j=\beta_j+\ba{\beta_{n-j}}p,\quad j=1,\dots,n-1,
\end{equation}
for some $(\beta_1,\dots,\beta_{n-1})\in \cplx^{n-1}$. Consider $f,
g \in \mathbb{C}[z]$ as $\beta_1,\dots,\beta_{n-1}$:
\begin{align*}
f(z)&:=z^n-s_1z^{n-1}+\dots+(-1)^{n-1}s_{n-1}z+(-1)^np\\
g(z)&:=z^{n-1}-\beta_1z^{n-2}+\dots+(-1)^{n-1}\beta_{n-1}.
\end{align*}
Since $(s_1,\dots,s_{n-1},p)\in \Gamma_n$, $Z_f\subset \ba{\dsc}$.
Then by Proposition~\ref{Prop:zeros} it follows that all the zeros
of $g$ lies in $\ba{\dsc}$. Hence, $(\beta_1,\dots, \beta_{n-1})\in
\Gamma_{n-1}$.

\NI Conversely, suppose $(ii)$ holds.  Let $(s_1,\dots,
s_{n-1},p)\in \cplx^n$ and there exists
$(\beta_1,\dots,\beta_{n-1})\in\Gamma_{n-1}$ such that the equations
in \eqref{eq-sj} hold. As before, we only need to treat the case
$|p|<1$. We will again apply Proposition~\ref{Prop:zeros} to  on the
polynomials:
\begin{align*}
f(z)&:=z^n-s_1z^{n-1}+\dots+(-1)^{n-1}s_{n-1}z+(-1)^np\\
g(z)&:=z^{n-1}-\beta_1z^{n-2}+\dots+(-1)^{n-1}\beta_{n-1}.
\end{align*}
We conclude from here that $(s_1,\dots,s_{n-1},p)\in \Gamma_n$.
\end{proof}

\noindent\textbf{Acknowledgment:} The first named author is grateful
to Indian Statistical Institute, Bangalore Centre for warm
hospitality. The work of the first named author is supported by an
INSPIRE faculty fellowship (IFA-MA-02) funded by DST.

\end{document}